\documentclass{amsart}
\usepackage{color}
\usepackage{graphicx}
\usepackage{amsmath,amscd}
\usepackage{amssymb}
\input epsf
\usepackage{epsfig}
\usepackage{stmaryrd}

\usepackage{enumerate}
\newtheorem{theorem}{Theorem}[section]
\newtheorem{lemma}[theorem]{Lemma}
\newtheorem{corollary}[theorem]{Corollary}

\newtheorem{proposition}[theorem]{Proposition}
\theoremstyle{definition}
\newtheorem{definition}[theorem]{Definition}

\theoremstyle{remark}

\numberwithin{equation}{section}

\newcommand{\Real}{{\mathbb R}}

\newcommand{\eps}{\varepsilon}

\newcommand {\hide}[1]{}

\begin{document}
\title[Lojasiewicz inequality] {An exponential {\L}ojasiewicz inequality for semi-Pfaffian sets}
\author{Nicolai Vorobjov}
\address{Nicolai Vorobjov\\
St.Petersburg Department of Steklov Mathematical Institute} 
\email{nicolaivorobjov@gmail.com}

\begin{abstract}
We prove a generalization of the version, due to A. Gabrielov \cite{G}, of the {\L}ojasiewicz inequality for
not necessarily restricted semi-Pfaffian sets.
Unlike the most variants of the {\L}ojasiewicz inequality, it describes the rate of growth of a Pfaffian function
on a semi-Pfaffian set $X$ not only in a neighbourhood of its zero set in the domain of $X$ but also in a
neighbourhood of zeroes on the boundary of the domain.
Gabrielov's version is essentially one-dimensional, we generalize it to a multidimensional case.
\end{abstract}

\maketitle

\section{Introduction}

The {\L}ojasiewicz inequality is a fundamental tool in real analytic geometry.
It establishes a lower bound on the rate of growth of an analytic function in the neighbourhood of its zeroes.
Many versions of the inequality are known, applicable to settings varying from semialgebraic category to
o-minimal structures.
In \cite{G}, for the needs of the theory of Pfaffian {\em limit sets}, the following version of
the {\L}ojasiewicz inequality was proved
(we will remind necessary definitions from the theory of Pfaffian functions in the next section.)

In what follows, $\overline X$ denotes the topological closure of a set $X \subset \Real^n$ in the ambient space.
For $r \in {\mathbb N}$ define the iterated exponential function as $e_r(t)= \exp(e_{r-1}(t))$ with $e_0(t)=t$.

\begin{proposition}[\cite{G}, Proposition~2.12]\label{prop:one-dim}
Let $G \subset \Real^n$ be the domain of a Pfaffian chain of order $r$.
Let $X \subset G$ be a (not necessarily restricted) semi-Pfaffian set with respect to this chain,
and let $g:\ G \to \Real$ be Pfaffian with respect to the same chain. 
Suppose that ${\bf 0} \in \overline{X \cap \{ g>0 \}}$.
Then
\begin{equation}\label{eq:gab2.12}
{\bf 0} \in \overline{\left\{  x \in X,\  g(x) \ge \frac{1}{e_r(\| x \|^{-N})} \right\}}
\end{equation}
for some $N \in {\mathbb N}$.
\end{proposition}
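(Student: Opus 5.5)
The plan is to reduce the statement to a one-variable growth estimate along a curve that is itself semi-Pfaffian with respect to the same chain, and then to prove that estimate by induction on $r$. Throughout we use that semi-Pfaffian sets generate an o-minimal structure (Wilkie, Gabrielov), so definable families have finitely many connected components and curve selection is available. The important point is that we should not apply curve selection blindly, because it produces a curve definable only in the Pfaffian closure, and its order could exceed $r$. Instead we will build the curve from critical points, which keeps everything inside the original chain.

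\emph{Step 1: a curve of the same order.} For small $s>0$ consider $X_s=X\cap\{g>0\}\cap\{\|x\|^2=s\}$, which is nonempty for arbitrarily small $s$ by hypothesis. Take a Whitney stratification of $X\cap\{g>0\}$ compatible with the spheres $\{\|x\|^2=s\}$; the strata can be chosen semi-Pfaffian with respect to the same chain, possibly after enlarging the formula degrees. On each stratum we impose the Lagrange conditions saying that $x$ is a critical point of $g$ restricted to the stratum intersected with the sphere. These conditions are polynomial in $x$, in the chain functions $f_1,\dots,f_r$ and in their derivatives, and the derivatives are themselves polynomials in $x,f_1,\dots,f_r$. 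Hence the set $C$ of such critical points is semi-Pfaffian with respect to the original chain of order $r$. The supremum of $g$ on $X_s$ is approached either at a point of $C$ or along the boundary of $X_s$; in the latter case $g\to0$, which only helps the lower bound, or the boundary lies on a lower-dimensional stratum. Since $C$ has finitely many components in each fibre, $\dim C\le1$ near ${\bf 0}$. We then choose a one-dimensional piece $\Gamma\subset C$ with ${\bf 0}\in\overline\Gamma$ on which $g>0$. After a linear change of coordinates and shrinking, $\Gamma$ is the graph of a map $t\mapsto x(t)$, $t\in(0,\delta)$, with $\|x(t)\|\to0$. In the worst case $\Gamma$ accumulates to $\partial G$, which is exactly where the non-restricted setting bites.

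\emph{Step 2: one-variable Pfaffian functions.} Along $\Gamma$ the functions $\phi_j(t)=f_j(x(t))$ satisfy triangular equations $\phi_j'=\tilde P_j(t,x(t),\phi_1,\dots,\phi_j)$. Here $x'(t)$ is obtained by implicit differentiation of the polynomial equations defining $\Gamma$, so $x'(t)$ is a rational function of $t,x,\phi$. Multiplying through by the Jacobian denominator, which has finitely many zeros, we may treat $u(t)=g(x(t))$ as a Pfaffian-type function of order $r$ on $(0,\delta)$. Substitute $\tau=1/t\to\infty$. It then suffices to prove the following lemma: if $u>0$ is such a function and $u\to0$, then $u(\tau)\ge1/e_r(\tau^N)$ for large $\tau$. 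Since $\|x(t)\|$ is comparable to a power of $t$ (both are definable and polynomially bounded relative to each other along a Pfaffian curve by the same argument), this gives \eqref{eq:gab2.12}.

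\emph{Step 3: the growth lemma, the main obstacle.} We prove it by induction on $r$. First, from the triangular system, each $\phi_j$ is bounded by $e_j(\tau^{N_j})$; this is Khovanskii's growth bound, obtained by integrating and iterating $\phi_j'\le\mathrm{poly}\cdot(1+\phi_j)^{d}$. For the lower bound on $u$, consider $v=u'/u$. The function $v$ has finitely many zeros and sign changes; this follows from Khovanskii's bound applied to $u'$ and to $u'-cu$. In the expression $u'=\tilde Q(\tau,\phi,u)$ we can separate $u$, and then the degree in $u$ together with the upper bounds on $\phi_j$ give $|u'/u|\le e_{r-1}(\tau^{M})$ on a tail, outside a small set where $u$ is not small. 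Integrating, $|\log u(\tau)|\le\tau\, e_{r-1}(\tau^M)+C\le e_{r-1}(\tau^{M'})$ (for $r\ge1$), so $u\ge 1/e_r(\tau^{N})$. The delicate point, and the one I expect to be hardest, is justifying the bound $|u'/u|\le e_{r-1}(\tau^M)$. I would try to get it by writing $u'u^{-1}$ as a polynomial in $u$ divided by $u$, and using that $\tilde Q(\tau,\phi,0)$ is either identically zero or, by the induction hypothesis applied to this order-$r$ function with fewer variables of dependence, bounded below by $1/e_{r-1}(\tau^M)$. In that case $u$ cannot decay faster than this rate, and we conclude by a comparison argument.
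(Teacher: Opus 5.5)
\textbf{The main gap: Steps 2--3 do not preserve the order $r$.} This is exactly where the content of the proposition lies, namely getting $e_r$ with the \emph{same} $r$. Restricting the chain to the curve destroys its triangular structure. You have $\phi_j'=\sum_i P_{ij}(x,\phi_1,\dots,\phi_j)\,x_i'(t)$, and $x'(t)$, obtained by Cramer's rule from the Jacobian of the equations of $\Gamma$, in general depends on all of $\phi_1,\dots,\phi_r$. Clearing the denominator does not change this, and the $x_i(t)$ become extra unknowns. So $u$ is not a Pfaffian-type function of order $r$ in $\tau$, and your Khovanskii-type bound $\phi_j\le e_j(\tau^{N_j})$ is false along curves.

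For a counterexample, on $G=\{x_1>0,\ x_2>0\}\subset\Real^2$ take the chain $f_1=1/x_2$, $f_2=1/x_1$, $f_3=e^{f_2}$, $f_4=e^{-f_3}$, and the curve $\{x_2=f_4(x)\}$, i.e.\ $x_2=\exp(-\exp(1/x_1))$. With $t=x_1$ you have $\|x(t)\|\sim t$, but $\phi_1=1/x_2=e_2(\tau)$, which exceeds $e_1(\tau^N)$ for every $N$. Indeed $\phi_1'=-\phi_1^2x_2'$ with $x_2'=\phi_2^2\phi_3\phi_4$, so the restricted system is not triangular.

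The lower bound, which you flag as delicate, is also not established:
\begin{enumerate}
\item $u=g(x(\tau))$ is not a member of the chain, so ``separating $u$'' in $u'$ has no meaning.
\item $\tilde Q(\tau,\phi,0)$ still has order $r$, so no induction hypothesis applies to it; ``fewer variables of dependence'' is not a well-founded induction parameter.
\item Bounding $u'/u$ with the available size-$e_r$ bounds on $\phi_r$ would only give $g\ge 1/e_{r+1}$.
\end{enumerate}

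\textbf{The missing idea.} You need a measure of complexity that survives restriction to a curve. The paper, following \cite{G}, first reduces to a smooth semi-Pfaffian curve of the same order via Gabrielov's curve selection lemma. It then deduces the one-variable bound from Rosenlicht's growth theory for Hardy fields \cite{R}, which depends only on the field of germs along the curve, not on a triangular presentation. For example, the germs $x_i(t),\phi_j(t)$ generate a Hardy field closed under $d/dt$ (by implicit differentiation) whose transcendence degree over $\Real$ is at most $r+1$, since the $n-1$ equations of the curve cut down the $n$ coordinates.

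\textbf{Step 1 is in the right spirit but also needs repair.}
\begin{enumerate}
\item For this statement any curve in $X\cap\{g>0\}$ ending at ${\bf 0}$ suffices; you do not need the supremum.
\item Your supremum argument fails in the non-restricted case: the supremum of $g$ on $X_s$ may be approached at points of $\partial G$, where there are no strata and no critical points.
\item Finitely many components per fibre does not give $\dim C\le 1$. The paper's main proof uses a generic linear function and a Sard--Fubini argument for this.
\item $\|x(t)\|\asymp t$ requires $t$ to be a \emph{generic} linear coordinate. For the curve $x_2=1/\log(1/x_1)$ with $t=x_1$, converting a bound in $t$ into one in $\|x\|$ costs an extra exponential.
\end{enumerate}
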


The method of the proof of this proposition in \cite{G} is as follows.
First, using a version of the Curve Selection lemma (see \cite[Lemma~2.11]{G}), the statement is reduced to the case
when $X \cap \{ g>0 \}$ is a smooth curve.
More precisely, it is sufficient to prove that, for a branch $\gamma$ of this curve with
${\bf 0} \in \overline \gamma$, there is a neighbourhood $U$ of ${\bf 0}$ such that for every $x \in \gamma \cap U$
we have
\begin{equation}\label{eq:gab2.12v}
g(x) \ge \frac{1}{e_r(\| x \|^{-N})}
\end{equation}
for some $N \in {\mathbb N}$.
The proof of (\ref{eq:gab2.12v}) is then deduced from the growth properties of functions in Hardy fields
due to M. Rosenlicht \cite{R}.

Note, that in the case of Pfaffian functions defined in $\Real$, inequalities similar to (\ref{eq:gab2.12v})
were established by D. Grigoriev \cite{Gr}.
An important version of the {\L}ojasiewicz inequality in Pfaffian setting, with the bound involving iterated exponential
functions, is proved in \cite{L}.

In this paper we generalize Proposition~\ref{prop:one-dim} to the case when the point ${\bf 0}$ in the formulation
is replaced by $A \subset \Real^n$, in first instance a semi-Pfaffian closed smooth manifold,
and then a semi-Pfaffian compact smooth manifold with boundary.
In these generalizations, the semi-Pfaffian set $X$ is not necessarily restricted and $A$ may lie in the boundary
of the domain in which $X$ is defined.
Possible further generalizations of the type of $A$ would probably require resolution of some difficult questions,
concerning tubular neighbourhoods of stratified sets, which is likely to overshadow the actual
Lojasiewicz-type inequality results.

\section{Preliminaries}\label{sec:prelim}

We recall some basic definitions and properties of Pfaffian functions and semi- and sub-Pfaffian sets.
A more comprehensive exposition can be found in \cite{GV, Kh}.

\begin{definition}[\cite{GV}]
A {\em Pfaffian chain} of the order $r \ge 0$ and degree $\alpha \ge 1$ in an open domain $G \subset \Real^n$ is 
a tuple of analytic functions $f_1, \ldots , f_r$ in $G$ satisfying differential equations
$$
\frac{\partial f_j}{\partial x_i}= P_{i,j}(x, f_1(x), \ldots, f_j(x))
$$
for $1 \le j \le r,\ 1 \le i \le n$.
Here $P_{i,j}(x, y_1, \ldots, y_j)$ are polynomials in variables $x=(x_1, \ldots ,x_n),\ y_1, \ldots ,y_j$
of degrees not exceeding $\alpha$.
A function
$$f(x)=P(x, f_1(x), \ldots, f_r(x)),$$
where $P(x, y_1, \ldots ,y_r)$ is a polynomial of a degree not exceeding $\beta \ge 1$, is called
a {\em Pfaffian function} of order $r$ and degree $(\alpha, \beta)$. 
\end{definition}

Examples of Pfaffian functions include multivariate polynomials, logarithm in $G=\{ x>0 \}$, trigonometric functions
in appropriate bounded domains.
The function $e_r(t)$ is Pfaffian in $G= \Real$ of order $r$ and degree $(r,1)$.

\begin{definition}[\cite{GV}]
A set $X \subset \Real^n$ is called {\em semi-Pfaffian} in an open domain $G \subset \Real^n$ if it consists of
points in $G$ satisfying a Boolean combination of some equations and inequalities of the kind $f=0$, $g>0$,
where $f,g$ are Pfaffian functions having a common Pfaffian chain defined in $G$.
A semi-Pfaffian set $X$ is {\em restricted in} $G$ if $\overline X \subset G$.
A semi-Pfaffian set is called {\em basic} if the Boolean
combination is just a conjunction of equations and strict inequalities.
\end{definition}

\begin{definition}[\cite{GV}]
A set $X \subset \Real^n$ is called {\em sub-Pfaffian} if it is an image of a semi-Pfaffian set $Y \subset \Real^{n+m}$
under a projection into a subspace.
Sub-Pfaffian $X$ is called {\em restricted} if there exists a pre-image $Y$ which is a restricted semi-Pfaffian.
\end{definition}

A restricted sub-Pfaffian set need not be semi-Pfaffian, by the famous example of Osgood (\cite{O}, see also \cite{GV}).
In particular, quantifier elimination is not generally possible in the first order theory of the reals
with added Pfaffian functions.
Restricted sub-Pfaffian sets form an o-minimal structure \cite{Gcompl, GV}, for sub-Pfaffian sets that
are not restricted, this is currently not known.

Pfaffian functions, semi- and sub-Pfaffian sets and maps have global finiteness properties similar to
the properties of polynomials and semialgebraic sets.
In what follows, we will use, without special provisions, these finiteness properties, in particular will adjust
some general theorems (e.g., Sard's and Fubini's theorems) to the o-minimal setting.
 
In many instances it is possible to find explicit upper bounds on various characteristics of semi- and sub-Pfaffian
sets as functions of orders of Pfaffian chains and degrees of defining Pfaffian functions.
In this paper we will be interested only in the dependence on the order, which will always be a constant function.

We now recall two properties of semi-Pfaffian sets, needed in the proof of the main theorem.

\begin{proposition}[\cite{G2}]\label{prop:closure}
Let $X \subset G \subset \Real^n$ be a semi-Pfaffian set in a domain $G$ having the Pfaffian chain of order $r$.
Then both $\overline X \cap G$ and $\partial X= \overline{X \cap G} \setminus X$ are semi-Pfaffian sets in $G$,
having Pfaffian chains of order at most $r$.
\end{proposition}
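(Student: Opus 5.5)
The statement is Proposition~\ref{prop:closure}: for a semi-Pfaffian $X$ in a domain $G$ with a chain of order $r$, both $\overline X\cap G$ and $\partial X$ are semi-Pfaffian in $G$ with respect to chains of order at most $r$. I would argue by induction on $\dim X$, using a stratification into smooth cells that are \emph{effectively non-singular}, in the spirit of Gabrielov's approach.

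The first step is to write $X$ as a finite union of basic semi-Pfaffian sets $X_i=\{f_{i1}=\dots=f_{ik}=0,\ g_{i1}>0,\dots,g_{il}>0\}$, all defined by functions in the original chain. This reduces nothing in order, and closure commutes with finite unions, so it suffices to treat a basic set. The second step refines each basic set, inside the same chain, into finitely many pieces on which the defining equations form a regular system. Concretely, I would use the loci where appropriate minors of the Jacobian of $(f_{i1},\dots,f_{ik})$ do or do not vanish. These minors are polynomials in $x$, $f_1,\dots,f_r$ and the derivatives, and the derivatives are again polynomials in the chain by the Pfaffian equations. Hence every new set stays semi-Pfaffian in $G$ with order at most $r$. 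The pieces are smooth manifolds $S$, each cut out by $k'$ equations with independent differentials together with strict inequalities. The singular pieces have strictly lower dimension and are handled by induction.

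For a smooth piece $S=\{h=0,\ g>0\}$ with $dh$ of full rank, the key claim is that $\overline S\cap G\setminus S$ lies in the set $Z$ where either some $g_j=0$ or the rank of $dh$ drops. The set $Z$ is semi-Pfaffian of lower dimension. I would then write
\[
\overline S\cap G = S\cup\bigl(\overline S\cap Z\bigr).
\]
The hard part is describing $\overline S\cap Z$ without quantifiers, since the naive definition via limits uses $\forall\varepsilon\,\exists y$ and would only give a sub-Pfaffian set. To avoid this, I would show that $\overline S\cap Z$ is a union of strata of a semi-Pfaffian stratification of $Z$ with the Whitney (or frontier) property, adapted to $S$. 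By Khovanskii-type finiteness and the Rolle–Khovanskii theory, which is the content of Gabrielov's frontier result \cite{G2}, such a stratification can be produced within the same chain: the strata are again given by vanishing and nonvanishing of Pfaffian functions built from the chain and its derivatives. With this, whether a stratum lies in $\overline S$ is constant along the stratum, so $\overline S\cap Z$ is a finite union of strata and hence semi-Pfaffian. Once $\overline X\cap G$ is semi-Pfaffian, $\partial X=(\overline X\cap G)\setminus X$ follows by a Boolean operation.

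The main obstacle is exactly this elimination of the limit quantifier. The straightforward route fails, because the Osgood example shows that projections of restricted semi-Pfaffian sets need not be semi-Pfaffian. So the argument must genuinely rely on the special structure of closures: the frontier is contained in the zero set of finitely many explicit Pfaffian functions, and the boundary condition is locally constant on strata. Proving that the frontier condition is locally constant, uniformly even when $X$ is not restricted in $G$, is where I expect the main difficulty. I would attempt it via Gabrielov's frontier-stratification argument, checking at each step that the order of the chain never increases.
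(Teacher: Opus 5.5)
The paper gives no proof of this proposition; it imports it from Gabrielov \cite{G2}. Your outline therefore has to stand on its own, and it does not, because the one step that carries all the content is deferred to the result being proved.

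Your reductions are routine: passing to basic sets, passing to effectively nonsingular pieces, and the inclusion of the frontier of $S=\{h=0,\ g>0\}$ in $\{h=0,\ g\ge 0\}\setminus S$. All the content lies in describing $\overline S\cap Z$ by a quantifier-free formula in the same chain. For that step you invoke a semi-Pfaffian stratification of $Z$, of order at most $r$, along whose strata membership in $\overline S$ is constant, and you attribute its existence to ``Gabrielov's frontier result \cite{G2}''. That is circular. Such a stratification makes $\overline S\cap Z$ a finite union of strata. Conversely, if closures are known to be semi-Pfaffian, such a stratification follows from Proposition~\ref{prop:strat}. So its existence is essentially equivalent to the proposition itself.

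Nothing else at hand provides it:
\begin{enumerate}
\item Proposition~\ref{prop:strat} yields only weak stratifications. As the paper remarks in Definition~\ref{def:strat}, the boundary of a stratum need not be a union of strata, and a frontier condition would presuppose that closures of strata are semi-Pfaffian.
\item The o-minimal construction of stratifications with the frontier property relies on closures being first-order definable. Here that only gives a description with quantifiers.
\item A Whitney or frontier stratification of $Z$ alone would not make the condition $y\in\overline S$ constant on strata anyway. It has to be adapted to $\overline S$, which is the unknown set.
\end{enumerate}

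The induction on dimension also has nothing to act on. Jacobian minors can vanish on the whole zero set (take $f=x^2$ in $\Real^2$). So your ``singular pieces'' need not be lower-dimensional unless you use the derivative-adding construction behind Proposition~\ref{prop:strat}. More seriously, $Z$ need not be lower-dimensional than $S$. For $h=x^2y$ in $\Real^2$ you get $S=\{y=0,\ x\neq 0\}$, while the rank-drop locus in $\{h=0\}$ is the whole line $\{x=0\}$. Only $\overline S\cap Z=\{(0,0)\}$ is small, and that is precisely the set you are trying to define. The frontier of $S$ is not the closure of any lower-dimensional semi-Pfaffian set known in advance, so the induction hypothesis never applies.

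What is missing is an actual mechanism that certifies, by quantifier-free Pfaffian conditions in the same chain, that a point of $Z$ is a limit of $S$. Supplying that is the substance of \cite{G2}; your proposal correctly names this difficulty but does not resolve it.
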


\begin{definition}\label{def:nonsing}
A basic semi-Pfaffian set $X \subset \Real^n$, with ${\rm codim}\ X=k$, is {\em effectively nonsingular}, if
the system of equations and inequalities defining $X$ includes a set
of $k$ Pfaffian functions $h_{1}, \ldots , h_{k}$ such that the restriction $h_{j}|_{X} \equiv 0$ for
$1 \le j \le k$ and $dh_{1} \land \cdots \land dh_{k} \neq 0$ at every point of $X$.
\end{definition}

\begin{definition}[\cite{GV-strat}]\label{def:strat}
A {\em weak stratification} of a semi-Pfaffian set $X$ is a partition of $X$ into a
disjoint union of smooth (i.e., nonsingular), not necessarily connected, semi-Pfaffian subsets $X_i$ called {\em strata}.
A stratification is {\em basic} if all strata are basic semi-Pfaffian sets which are {\em effectively nonsingular}.
Observe, that if $X$ is compact, then the boundary of each stratum is contained in (but not necessarily coincides with)
the union of some other strata.
In what follows, we will call a basic weak stratification of $X$ just a {\em stratification} of $X$.
\end{definition}

\begin{proposition}[\cite{GV-strat}]\label{prop:strat}
Let $X \subset \Real^n$ be a semi-Pfaffian set having the Pfaffian chain of order $r$.
There exists a stratification of $X$ (in the sense of Definition~\ref{def:strat}) such that each stratum
is a semi-Pfaffian set having Pfaffian chain of order at most $r$.
\end{proposition}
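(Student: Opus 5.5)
The plan is an induction on $\dim X$. All auxiliary functions will be built from the defining functions of $X$ by polynomial operations and partial differentiation. Since $\partial f/\partial x_i$ is again a polynomial in $x,f_1,\ldots,f_r$ for any $f$ Pfaffian in the chain $f_1,\ldots,f_r$, every such function stays Pfaffian \emph{in the same chain}. This is what will guarantee that the order never exceeds $r$.

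\textbf{Reduction to basic sets.} First I partition $X$ by sign conditions on the finitely many Pfaffian functions appearing in its Boolean definition. Each piece is a basic set
$$Z=\{h_1=\cdots=h_s=0,\ g_1>0,\ldots,g_t>0\}.$$
I replace the equations by the single equation $h=\sum h_i^2=0$, which uses the same chain, so it suffices to stratify one such $Z$.

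\textbf{Peeling off an effectively nonsingular top stratum.} Let $\mathcal D$ be the finite family of partial derivatives $D^\alpha h$ with $|\alpha|\le M$, for a bound $M$ to be fixed (finiteness properties of Pfaffian functions should bound the vanishing order of $h$ along $Z$). For each $k$ and each $k$-tuple $(u_1,\ldots,u_k)$ of elements of $\mathcal D$ vanishing identically on the relevant part of $Z$, consider
$$S_{u}=Z\cap\{u_1=\cdots=u_k=0\}\cap\{du_1\wedge\cdots\wedge du_k\neq 0\}.$$
The condition $du_1\wedge\cdots\wedge du_k\ne0$ is a finite disjunction of inequalities "some $k\times k$ minor $\ne 0$". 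Splitting by which minor is nonzero, and by its sign, makes each piece basic. Each piece is effectively nonsingular of codimension $k$ in the sense of Definition~\ref{def:nonsing}, and is defined in the original chain.

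I take as top strata the pieces obtained with the minimal $k$, i.e.\ maximal dimension, after removing from later tuples the points already covered. The complement of these strata in $Z$ is again semi-Pfaffian in the same chain, by Proposition~\ref{prop:closure} together with the fact that it is cut out by vanishing of minors. I then apply induction to it.

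\textbf{Main obstacle: the dimension drop.} I must show that the complement has dimension strictly less than $\dim Z$. Equivalently, at a generic point $p$ of the top-dimensional part of $Z$, where $Z$ is an analytic manifold of codimension $k$, I need $k$ elements of $\mathcal D$ with independent differentials. I would try to obtain this as follows.

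1. Choose local coordinates $(y,z)$ with $Z=\{z=0\}$ near $p$.
2. Since $h\ge0$ and $h$ vanishes exactly on $Z$, $h$ has even order $2m$ in $z$, with leading form $Q_y(z)$ positive definite in $z$ for generic $y$.
3. Differentiating $2m-1$ times, the derivatives $\partial_z^{\beta}h$ with $|\beta|=2m-1$ vanish on $Z$. Their $z$-differentials are the partials of $Q_y$ of order $2m-1$, and these span all $k$ directions because $Q_y$ is nondegenerate.
4. Rewriting $\partial_z$ in terms of $\partial_x$ by a linear change of variables, which I can take generic, gives $k$ functions $D^\alpha h\in\mathcal D$ with the required independence, up to terms vanishing on $Z$.

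The delicate points are the genericity claim, namely that the leading form is nondegenerate off a lower-dimensional subset, and the uniform bound $M$ on $2m$. I expect this to be the hardest step and would handle it via the finiteness (o-minimal-type) properties of Pfaffian sets.

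Finally, the induction on dimension terminates, yielding the desired stratification with all strata defined in chains of order at most $r$.
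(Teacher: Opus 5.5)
\textbf{There is a genuine gap, exactly at the step you flagged.} The paper does not prove this proposition; it is quoted from \cite{GV-strat}, so your argument has to stand on its own. The positive-definiteness claim in step~2 is false, and the failure is not a technicality. Take $r=0$, $n=3$, $\phi_1=x_1+x_2+x_1^2+x_1x_2+3x_2^2$, $\phi_2=x_1-x_2$, and the basic set $Z=\{\phi_1=0,\ \phi_2^2=0\}$. Then $h=\phi_1^2+\phi_2^4$ does not involve $x_3$, and near the $x_3$-axis $Z$ is that axis, of codimension $2$.

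\begin{itemize}
\item The leading normal form of $h$ is $(z_1+z_2)^2$ at every point of $Z$, so it is degenerate for every $y$.
\item We have $\deg h=4$, and the homogeneous parts of degrees $2,3,4$ are $(x_1+x_2)^2$, $2x_1^3+4x_1^2x_2+8x_1x_2^2+6x_2^3$ and $2x_1^4-2x_1^3x_2+13x_1^2x_2^2+2x_1x_2^3+10x_2^4$. None of these has a zero coefficient.
\item Hence the only partial derivatives $D^\alpha h$ that vanish on the axis and have nonzero differential there are $\partial_1h$ and $\partial_2h$. Both have differential $2(dx_1+dx_2)$ along $Z$, and a generic linear change of coordinates does not change this.
\end{itemize}

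So for no $M$ does your family $\mathcal D$ produce two independent differentials at any point of the axis.

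The example also breaks your definition of the pieces. $\partial_1h$ vanishes identically on $Z$ and $d\partial_1h\neq0$ there. Yet $Z\cap\{\partial_1h=0,\ d\partial_1h\neq0\}$ is the axis, which does not have codimension $1$, so this piece is not effectively nonsingular in the sense of Definition~\ref{def:nonsing}. To repair this you must require that $Z$ coincide locally with $\{u=0\}$. That is a relative-interior condition, and it is semi-Pfaffian only via Proposition~\ref{prop:closure}. Once you impose it, no piece covers $Z$, and the dimension does not drop.

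\textbf{The missing idea is to differentiate along vector fields tangent to what you have already cut out, not along fixed coordinate directions.} Suppose $u_1$ vanishes on $Z$ near $p$ with $du_1\neq0$. The fields $\xi_{ij}=\partial_ju_1\,\partial_i-\partial_iu_1\,\partial_j$ are tangent to $\{u_1=0\}$ and have coefficients in the same chain. The restriction $h|_{\{u_1=0\}}$ vanishes along $Z$ to some finite order $m$ at generic points. Suitable $(m-1)$-fold compositions of the $\xi_{ij}$ applied to $h$ then vanish on $Z$ and have differential independent of $du_1$. In the example, $h=\phi_2^4(1+O(\phi_2^2))$ on $\{\partial_1h=0\}$, and $\xi_{12}^3h$ supplies the missing $d\phi_2$. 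Iterating $k$ times gives the required $k$ functions, all Pfaffian in the original chain, so the order stays at most $r$.

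You also still need a real argument for a uniform bound on the number of differentiations. It does not follow from finiteness properties of semi-Pfaffian sets alone, because the exceptional sets for different $M$ do not form a single definable family. It needs, for example, multiplicity bounds for Pfaffian intersections, or a Noetherianity argument in the ring $\Real[x,f_1,\ldots,f_r]$, which is closed under $\partial/\partial x_i$.
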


\section{Main result}

Suppose that $g:\ \Real^n \to \Real$ is a Pfaffian function of order $s$.
Let $A \subset \{ g=0 \}$ be a closed (compact, without boundary) smooth manifold which is an
{\em effectively non-singular} basic
semi-Pfaffian set (see Definition~\ref{def:nonsing}) with the same, as $g$, Pfaffian chain of order $s$.
Let $G \subset \Real^n$ be the domain of a Pfaffian chain of order $r$.
Let $X \subset G$ be a (not necessarily restricted) semi-Pfaffian set with respect to this chain.
For $a \in A$, let $X_\eps(a)$ be set of all vectors $x-a$ in the intersection of the normal space
to $A$ at $a$ with $X \cap \{ g>0 \}$, having norm $\| x-a \|=\eps$.

For each $a \in A$ and each $\eps >0$ the set $X_\eps(a)$ is semi-Pfaffian (with the order of
Pfaffian chain at most $r+s$), because the normal space is the span of gradients of functions defining
$A$ and can be expressed in terms of ranks of matrices.

\begin{theorem}\label{th:main2}
Let $\overline X \subset ( (\{g>0 \} \cap G)\cup A)$. 
Then there is a neighbourhood $U$ of $A$ in $X \cap \{ g>0 \}$ and there is $N \in {\mathbb N}$ such that
for every $x \in U$  the inequality
\begin{equation}\label{eq:bound2}
g(x) \ge \frac{1}{e_{r+s}({\rm dist} (x, A)^{-N})}
\end{equation}
holds true.
\end{theorem}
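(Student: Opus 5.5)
We argue by contradiction and reduce to the one-dimensional Proposition~\ref{prop:one-dim}, applied to an auxiliary semi-Pfaffian set built from the normal slices $X_\eps(a)$. The key quantity is
$$
m(\eps)=\inf_{a\in A}\ \inf_{v\in X_\eps(a)} g(a+v),
$$
the smallest value of $g$ at distance $\eps$ from $A$ along normals. Choose $\eps_0$ small enough that the normal map $(a,v)\mapsto a+v$ is a diffeomorphism from the $\eps_0$-disc normal bundle onto a tubular neighbourhood $T$ of $A$. This is possible because $A$ is compact and smooth. For $x\in T$ we then have ${\rm dist}(x,A)=\|v\|$, where $x=a+v$. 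So it suffices to show $m(\eps)\ge 1/e_{r+s}(\eps^{-N})$ for small $\eps>0$, and to take $U=T\cap X\cap\{g>0\}$.

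First, $m(\eps)>0$ for each small $\eps$. The set $\bigcup_a (a+X_\eps(a))$ lies in the compact shell $\{{\rm dist}(x,A)=\eps\}$. Its closure lies in $\overline X\subset(\{g>0\}\cap G)\cup A$ and does not meet $A$, so it lies in $\{g>0\}$. Hence $g$ is bounded below by a positive constant there.

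Second, consider the set
$$
Y=\{(\eps,t)\in\Real^2:\ \exists a\in A,\ \exists x\in X,\ x-a\perp T_aA,\ \|x-a\|=\eps,\ g(x)=t>0\}.
$$
Then $m(\eps)=\inf\{t:(\eps,t)\in Y\}$. The difficulty is that $Y$ is only sub-Pfaffian and not restricted, so we cannot use o-minimality for it directly. Instead we stay with the semi-Pfaffian set
$$
Z=\{(a,x): a\in A,\ x\in X\cap\{g>0\},\ x-a\perp T_aA,\ \|x-a\|<\eps_0\}\subset\Real^{2n},
$$
which has chain order $r+s$, since the normality condition is expressed through ranks of matrices of gradients of the $h_j$.

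Suppose the theorem fails. Then for every $N$ there are points $(a_N,x_N)\in Z$ with $\eps_N=\|x_N-a_N\|\to0$ and $g(x_N)<1/e_{r+s}(\eps_N^{-N})$. Passing to a subsequence, we may assume $a_N\to a^*\in A$. Translate so that $(a^*,a^*)$ becomes the origin. On the domain $A\times G$ (order $r+s$), apply a curve-selection argument in the spirit of \cite[Lemma~2.11]{G} to the set of points of $Z$ where $g(x)<1/e_{r+s}(\|x-a\|^{-N})$, for all $N$ simultaneously. This is the main obstacle, since the condition involves infinitely many $N$. We would try to handle it by working with the germ of $Z$ at the origin, together with the two functions $\phi=\|x-a\|^2$ and $\psi=g(x)$, and stratifying via Proposition~\ref{prop:strat} and Proposition~\ref{prop:closure}. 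On a stratum whose closure contains the origin, the relevant boundary behaviour is governed by the "lowest" level curves of $\psi$ on the fibres $\{\phi=\eps^2\}$, and we would select a smooth semi-Pfaffian curve $\gamma$ in $Z$ along which $\psi$ attains the fibrewise minimum up to a constant factor. Such a curve can be defined by a critical-point condition ($d\psi\wedge d\phi\wedge dh=0$ on the stratum), and this condition is again semi-Pfaffian of order $r+s$.

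On $\gamma$, apply Proposition~\ref{prop:one-dim} in its curve form (\ref{eq:gab2.12v}), with the Hardy-field argument of Rosenlicht. We use the function $\psi$, the point at the origin, and the parameter $\|(a,x)\|$, which is comparable to $\phi^{1/2}$ up to a bounded factor near the origin. This gives
$$
\psi\ \ge\ 1/e_{r+s}(\phi^{-N_0/2})
$$
along $\gamma$, contradicting the choice of the points $(a_N,x_N)$ for $N>N_0$. Finally, compactness of $A$, together with the finiteness of strata and of curve branches, makes $N_0$ uniform in $a^*$, which yields (\ref{eq:bound2}).
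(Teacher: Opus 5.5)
\textbf{There is a genuine gap.} Your overall plan matches the paper's: select a curve along which $g$ realizes its minimum on the distance-$\eps$ slices, then apply the Hardy-field argument behind Proposition~\ref{prop:one-dim}. But the step that carries all the weight is never constructed.

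The set cut out by $d\psi\wedge d\phi\wedge dh=0$ on a stratum is the set of all critical points of $g$ on the slices $\{\phi=\eps^2\}$. Its fibres over $\eps$ are in general positive-dimensional: $g$ may be constant on whole components, and the minimisers on a slice need not be finite. So this set is not a curve, and the curve form of Proposition~\ref{prop:one-dim} cannot be applied to it. You also cannot fall back on curve selection for ``$g(x)<1/e_{r+s}(\|x-a\|^{-N})$ for all $N$''. That is not a single semi-Pfaffian condition, and near $A$ the sets are not restricted, so o-minimality is only available on truncations $\{\rho>\delta^2\}$.

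The paper supplies the missing idea as follows.
\begin{enumerate}
\item On the critical set $\widehat C_\eps$ of $\widehat g|_{\widehat Y_\eps}$, take a generic linear function $c\cdot(x,y)$.
\item A Sard--Fubini argument, using Lemma~2.14 of \cite{G}, shows that for all small $\eps$ its critical points on the strata of $\widehat C_\eps$ are non-degenerate, hence finite in number.
\item Their union $\Gamma$ over $\eps$ is therefore a semi-Pfaffian curve of order at most $r+s$. It meets every compact component of $\widehat C_\eps$, in particular the one carrying the minimum, since $\widehat g$ is constant on components of the critical set.
\item A uniform bound on the number of connected components of the restricted truncations \cite{GV-cd} then selects a single branch $\gamma$ carrying the minimum for all small $\eps$. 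The same counting argument is used beforehand to choose the stratum $\widehat Y$ whose slices contain the minimisers, a choice your sketch (``a stratum whose closure contains the origin'') does not make.
\end{enumerate}

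The localisation at a limit point $(a^*,a^*)$ of a bad sequence creates a second problem. Your contradiction needs the curve to dominate every point of the slice near $(a^*,a^*)$. However, the minimum of $g$ on a slice intersected with a neighbourhood of $(a^*,a^*)$ may lie on the boundary of that neighbourhood, where it is not a critical point. Moreover, your $Z$ uses $x\in X\cap\{g>0\}$, so its slices need not be compact and the infimum need not be attained at all.

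The paper avoids both issues. It defines $Z$ with $x\in\overline{X\cap\{g>0\}}\setminus A$ and works with the global minimum over the compact set $\overline{\widehat X_\eps}$. Once $\gamma$ exists, no contradiction argument or uniformity in $a^*$ is needed. For every $x'$ at distance $\eps$ from $A$, $g(x')$ is at least the value of $\widehat g$ at the point of $\gamma$ on that slice. The bound along $\gamma$ near its unique limit point $(a,a)$ then transfers via $\|x-y\|\le\sqrt{2}\,\|(x,y)-(a,a)\|$.
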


\begin{proof}
First notice, that if $A \cap \overline{X \cap \{ g>0 \}} = \emptyset$, then, by compactness of $A$,
there is a neighbourhood of $A$ containing no points of $X \cap \{ g>0 \}$, and the theorem is vacuous. 

Introduce new variables $y=(y_1, \ldots, y_n)$.
Let $\widehat X_\eps \subset \Real^{2n}$, where $\Real^{2n}$ is the space of coordinates
$x_1, \ldots ,x_n,y_1, \ldots, y_n$, be the set coinciding with $X_\eps(a)$ for every fixed $y=a \in A$.
Observe, that for all sufficiently small $\eps >0$,
$$
\widehat X_\eps= \{ (x,y) \in X \times A|\> y\ \text{is nearest to}\ x\ \text{among points in}\ A,\ \| x-y \|= \eps \},
$$
noting that the relation ``$y$ is nearest to $x$'' is equivalent to $x-y \in N_y A$, where
$N_y A$ is the normal space to $A$ at $y$.

The set $\widehat X_\eps \subset \Real^{2n}$ is semi-Pfaffian with the combined Pfaffian chain of $g$ and $X$,
having the order at most $r+s$.
Denote by $\pi:\ \Real^{2n} \to \Real^n$ the projection map on the subspace of variables $x_1, \ldots, x_n$.
Observe, that for every point $x$ in the neighbourhood of $A$ in $X \cap \{ g>0 \}$ there is $a \in A$ (the closest
point to $x$ in $A$) and there is $\eps >0$ (the distance from $x$ to $A$) such that $x \in X_\eps(a)$.
It follows, that for all sufficiently small $\eps >0$, the sub-Pfaffian set
$X_\eps := \pi (\widehat X_\eps)$ is the set of all points in $X \cap \{ g>0 \}$
at the distance $\eps$ from $A$, i.e., distance-$\eps$ level set from $A$ in $X \cap \{ g>0 \}$.

Introduce the function $\widehat g:\ \Real^{2n} \to \Real$ defined as $(x,y) \mapsto g(x)$ for every $y \in \Real^n$. 
Since $\overline{\widehat X_\eps}$ is compact for each sufficiently small $\eps>0$, function $\widehat g$
attains minimum on $\overline{\widehat X_\eps}$.
Denote the set of all these minima by $\widehat M_\eps$.
It follows that for each sufficiently small $\eps>0$, function $g$ attains minimum on $\overline{X_\eps}$, and
the set of all these minima is $M_\eps:= \pi (\widehat M_\eps)$.
Observe, that sets $\widehat M_\eps$ and $M_\eps$ are restricted sub-Pfaffian.

Introduce the set
$$
Z:= \left\{(x,y)|\> x \in \overline{X \cap {\{ g>0 \}}} \setminus A,\ y \in A,\ x-y \in N_y A \right\}
$$
Due to Proposition~\ref{prop:closure}, $Z$ is a semi-Pfaffian set, while,
due to Proposition~\ref{prop:strat}, there is an effectively non-singular stratification ${\mathcal S}_1$ of $Z$.
Also, by Proposition~\ref{prop:strat}, all strata are semi-Pfaffian sets with the combined Pfaffian chain of $g$ and $X$,
having the order at most $r+s$.
Define $\widehat A:= \{ (x,y) \in A^2|\> x=y \} \subset \Real^{2n}$ and $\rho(x,y):= \|x-y \|^2$.
Choose initially a neighbourhood $U$ of $\widehat A$ such that for each stratum $S$,
$$
S \cap U \neq \emptyset\quad \text{implies}\quad \widehat A \cap \overline{S \cap U} \neq \emptyset.
$$
(Note, that here and further $U$ can be chosen to be semi-Pfaffian, with the Pfaffian chain of order at most $r+s$,
by adding the inequality $\rho (x,y) < \delta$ for sufficiently small $\delta >0$.)

Consider $\widehat Y:=S \cap U$, where $S$ is one of the strata in ${\mathcal S}_1$ with $S \cap U \neq \emptyset$,
such that $\widehat Y_\eps:= \widehat Y \cap \{ \rho (x,y)= \eps^2 \}$
contains minimum of $\widehat g$ on $\overline{\widehat X_\eps}$ for all sufficiently small $\eps> 0$.

Such $\widehat Y$ exists.
Indeed, let
$$
\widehat M:= \bigcup_{0 < \eps \le \eps_0} \widehat M_\eps,
$$
where $\eps_0$ is small.
For every $\delta >0$, for every $S$ in ${\mathcal S}_1$ the truncated set
$$\widehat M \cap S \cap \{ \rho(x,y) > \delta^2 \}$$
is restricted sub-Pfaffian.
Therefore, there exists an upper bound on the number of connected components of this set,
not depending on $\delta$ (see \cite{GV-cd}).
It follows that for every $S$ either there exists $\delta>0$ such that  all connected components of $\widehat M \cap S$
stay a positive distance away from $\widehat A$, or there is a connected component $M'$ of $\widehat M \cap S$ such that
$\overline{M'} \cap \widehat A \neq \emptyset$.
The first alternative can't be true for all $S$ since that would contradict the fact that for each sufficiently
small $\eps>0$, function $\widehat g$ attains minimum on $\overline{\widehat X_\eps}$.
Hence, there is a stratum $S$ for which the second alternative is valid, and we take $\widehat Y= S \cap U$.
Then $\widehat Y_\eps= \widehat Y \cap \{ \rho (x,y)= \eps^2 \}$ is as required, i.e.,
contains minimum of $\widehat g$ on $\overline{\widehat X_\eps}$ for all sufficiently small $\eps> 0$.
Indeed, since $M'$ is connected and $\rho$ in continuous on $M'$, $\rho (M')$ is an interval.
Moreover, $\rho$ is positive on $Z$, while $0 \in \overline{\rho (M')}$ because
$\overline{M'} \cap \widehat A \neq \emptyset$.
Hence, $(0, \eps') \subset \rho (M')$ for some $\eps'>0$.

Since $\widehat Y$ is a smooth manifold by construction, and $\rho|_{\widehat Y}$ is smooth
(as a restriction of smooth function to smooth submanifold), the set
$\widehat Y_\eps:= \widehat Y \cap \{ \rho= \eps^2 \}$ is a smooth manifold by Sard's theorem.

We can also assume that sets $\widehat M \cap \widehat Y_\eps$ are compact
for all sufficiently small $\eps >0 $.
Otherwise, the frontiers $\overline{\widehat M \cap \widehat Y_\eps} \setminus (\widehat M \cap \widehat Y _\eps)$
for some $\eps >0$ would lie in the union $W$ of some adjacent to $\widehat Y$ strata of smaller dimension.
If each such frontier lies outside a neighbourhood of $\widehat A$, then we decrease $U$ to fit into this neighbourhood.
Else, we replace $\widehat Y$ by one of the strata in $W$, and continue the argument inductively.
It follows that the sets of minima of $\widehat g$ contained in $\widehat Y_\eps$ can be assumed to be compact.

The restriction $\widehat g|_{\widehat Y_\eps}$ is smooth for sufficiently small $\eps >0$.
Minima of $\widehat g$ on $\widehat Y_\eps$ are attained at critical points of $\widehat g|_{\widehat Y_\eps}$.

Define
\begin{equation}\label{eq:critical}
\widehat C:= \left\{ (x,y) \in \widehat Y|\ \nabla \widehat g(x,y) \in \left( N_{(x,y)} \widehat Y +
{\rm span} \{ \nabla \rho (x,y) \}\right)\right\},
\end{equation}
where $+$ stands for the sum of two subspaces.
Observe, that the condition on $\nabla \widehat g(x,y)$ in (\ref{eq:critical}) is the critical point
condition for $\widehat g|_{\widehat Y_\eps}$ at $(x,y)$ for every sufficiently small $\eps >0$, and it is
independent of $\eps$.
It follows, that $\widehat C_\eps:= \widehat C \cap \{ \rho= \eps^2 \}$ is the set of all critical
points of $\widehat g|_{\widehat Y_\eps}$.
In particular, $\widehat C_\eps$ contains all minima of $\widehat g$ on $\widehat Y_\eps$
for all sufficiently small $\eps >0$. 
The set $\widehat C$ can be defined in terms of ranks of some matrices, and therefore is semi-Pfaffian,
with Pfaffian chain of the order at most $r+s$.

Consider the effectively non-singular stratification ${\mathcal S}_2$ of $\widehat C$.
All strata of ${\mathcal S}_2$ are semi-Pfaffian sets,
with the Pfaffian chain of the order at most $r+s$.
By Sard's theorem applied to function $\rho(x,y)$, the stratification ${\mathcal S}_2$
induces the effectively non-singular stratification
${\mathcal S}_3$ on $\widehat C_\eps$ for all sufficiently small $\eps >0$.
Since each stratum $S$ of ${\mathcal S}_3$ is of the form $S=S' \cap \{ \rho= \eps^2 \}$, where
$S'$ is a stratum of ${\mathcal S}_2$, all strata of ${\mathcal S}_3$ also are semi-Pfaffian sets,
having the Pfaffian chain of order at most $r+s$.

Generally, on every connected component of the set of critical points of a smooth function on a smooth manifold
the function has constant value.
It follows, that for every connected component $\widehat D_\eps$ of $\widehat C_\eps$
the restriction $\widehat g|_{\widehat D_\eps}$ is constant.
Let $\widehat D'_\eps$ be a component on which the global minimum of $\widehat g|_{\widehat C_\eps}$ is attained.
The component $\widehat D'_\eps$ is compact (since we assumed that the sets of minima of $\widehat g$,
contained in $\widehat Y_\eps$, are compact) and is a union of some connected components of some strata of
stratification $\widehat {\mathcal S}_3$.

Introduce the function $\psi:= c(x,y)$ on $\Real^{2n}$, where $c=(c_1, \ldots, c_{2n}) \in \Real^{2n}$
is a generic vector.
The restriction $\psi|_{\widehat D'_\eps}$ has a global extremum on compact set $\widehat D'_\eps$,
hence a critical point on at least one of connected components of a stratum of $\widehat{\mathcal S}_3$
contained in $\widehat D'_\eps$.
We claim that this critical point is non-degenerate, thus isolated (the following proof of this claim
is an adjustment of the proof of Lemma~2.15 in \cite{G}).

Indeed, let $S$ be a stratum in ${\mathcal S}_2$ and $S_\eps= S \cap \{ \rho = \eps^2 \}$ for sufficiently small $\eps >0$
be the corresponding stratum in ${\mathcal S}_3$.
Define
$$
F_{S, \eps}:= \{ c \in \Real^{2n}|\ \psi = c(x,y) \text{ has a degenerate critical point on}\ S_\eps \}.
$$
By Lemma~2.14 in \cite{G}, this set has measure zero in $\Real^{2n}$.

Let
$$
F_S:= \bigcup_{\eps \in (0, \eps_0)} (F_{S, \eps}, \eps)= \{ (c, \eps)|\ c \in F_{S, \eps} \}
\subset \Real^{2n} \times (0, \eps_0).
$$
Due to Fubini theorem, $F_S$ has measure zero in $\Real^{2n+1}$.
It follows, that for a generic $c$, the set
$$
E_{S,c}:= \{ \eps|\ (c, \eps) \in F_S \}= F_S \cap \{ c= {\rm const}\}
$$
has measure zero in $\Real$.
Since there are finitely many strata $S$ in ${\mathcal S}_2$, one generic $c$ guaranties
the set $E_{S,c}$ to have measure zero for every $S$.
It follows that
$$
E_c= \bigcup_{S \in {\mathcal S}_2} E_{S,c}
$$
has measure zero in $\Real$, hence is a finite set.
Thus, after sufficiently decreasing $\eps_0$, we get $E_c \cap (0, \eps_0)= \emptyset$.
It follows, that for every sufficiently small $\eps >0$ all critical points of $\psi$ on every stratum in
${\mathcal S}_3$ are non-degenerate.
In particular, critical points contained in $\widehat D'_\eps$ are nondegenerate.
The claim is proved.

Critical points of $\psi$ contained in sets of the kind
$\widehat D'_\eps$, belong to finite sets, satisfying the condition
$$
\nabla \psi \in \left( N_{(x,y)} S' + {\rm span} \{ \nabla \rho \} \right),
$$
where $S'$ is a stratum in ${\mathcal S}_2$.
This condition is independent of $\eps$.
Define
$$
\Gamma:= \bigcup_{S \in {\mathcal S}_2}\left\{ (x,y) \in S|\ \nabla \psi (x,y) \in \left( N_{(x,y)} S +
{\rm span} \{ \nabla \rho \} \right) \right\}.
$$
This is a semi-Pfaffian subset of $S$ having the Pfaffian chain of order at most $r+s$.
We claim that $\Gamma$ is one-dimensional set (a curve) such that
for some neighbourhood $\widehat U$ of $\widehat A$ the intersection $\Gamma \cap \widehat U$
is a disjoint union of smooth connected curves for each of which its closure intersects $\widehat A$.
Indeed, for every sufficiently small $\eps>0$ the set $\Gamma \cap \{ \rho= \eps^2 \}$ is finite and non-empty.
For all sufficiently small $0< \delta_0 < \eps_0$ the set $\Gamma \cap \{ \delta_0^2 < \rho < \eps_0^2 \}$
is restricted sub-Pfaffian.
It follows, by Hardt's triviality theorem \cite{C, vdD}, that this set is one-dimensional for all
sufficiently small $0< \delta_0 < \eps_0$, and, therefore, four a neighbourhood $\widehat U$,
the set $\Gamma \cap \widehat U$ is as required.

Now we argue as above (in the proof of existence of suitable $\widehat Y$).
Consider truncations $\Gamma \cap \widehat U \cap \{ \rho > \delta \}$ for sufficiently small $\delta >0$,
and note that the number of connected components of critical points of $\psi$, belonging to $\Gamma \cap \widehat U$,
is bounded from above independently of $\delta$.
It follows, that for some neighbourhood $\widehat U$ of $\widehat A$ in $Z$, 
at least one smooth curve in $\Gamma \cap \widehat U$, say $\gamma$, consists of points on which a global minimum
of $\widehat g|_{\overline{\widehat X_\eps} \cap \widehat U }$ for all sufficiently small $\eps >0$ is attained.
Finiteness properties of semi-Pfaffian sets imply, that $\overline \gamma \cap \widehat A$ is a single point,
let it be $(a,a)$.

Applying the proof of Proposition~1.2 to the smooth semi-Pfaffian curve $\gamma$, after translation by $(a,a)$,
we obtain $N' \in {\mathbb N}$ such that for all points of $\gamma$ sufficiently close to $(a,a)$,
the inequality
\begin{equation}\label{eq:bound3}
\widehat g(x,y) \ge \frac{1}{e_{r+s}(\|(x,y)-(a,a)\|^{-N'})}
\end{equation}
holds true. 

Since, by Cauchy--Schwarz inequality,
$${\rm dist} (x, A)=\| x-y \| \le \sqrt{2}\ \|(x,y)-(a,a)\|$$
and the function
$$
\varphi(z)=\frac{1}{e_{r+s}(z^{-N'})}
$$
monotonically increases with respect to $z$, the bound
(\ref{eq:bound3}) implies
$$
g(x) \ge \frac{1}{e_{r+s}\left( \left( \frac{1}{\sqrt{2}}\ {\rm dist} (x, A)\right )^{-N'} \right)}
$$
for every $x \in \pi(\gamma)$.
By choosing an integer $N$ sufficiently larger than $N'$, we get the inequality (\ref{eq:bound2}) for
every $x \in \pi(\gamma)$.
It follows that for all sufficiently small $\eps >0$, a point $x \in \pi(\gamma \cap \overline{\widehat X_\eps})$
satisfies the inequality (\ref{eq:bound2}).
On the other hand, for all $x' \in X_\eps$, we have $g(x) \le g(x')$.
It follows, that every $x' \in X_\eps$ satisfies (\ref{eq:bound2}) for the same $N \in {\mathbb N}$
and the neighbourhood $U= \{ x \in X \cap \{ g>0 \} |\ {\rm dist} (x,A) < \eps_0 \}$
for a sufficiently small $\eps_0 >0$.
\end{proof}

\section{Corollary}

In this section we generalize Theorem~\ref{th:main2} for the case when $A$ is a smooth manifold with boundary.

\begin{corollary}\label{cor:cor}
Theorem~\ref{th:main2} remains valid when $A$ is a compact smooth manifold with boundary
such that both $A \setminus \partial A$ and $\partial A$ are effectively nonsingular basic semi-Pfaffian sets
having the common Pfaffian chain of order $s$.
More precisely, assume that $A= \{ F_1= \cdots =F_q=0, H \le 0 \}$, $\partial A= A \cap \{ H=0\}$, where
$F_i, H$ are Pfaffian functions of order $s$,
and $dF_1 \land \cdots \land dF_q \land dH \neq 0$ at every point of $A$.
\end{corollary}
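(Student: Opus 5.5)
The plan is to reduce the corollary to Theorem~\ref{th:main2}. The one place where the proof of Theorem~\ref{th:main2} used that $A$ has no boundary is the identification ``$y$ is nearest to $x$'' $\iff$ $x-y\in N_yA$. With boundary this fails: the nearest point of $A$ to $x$ may lie on $\partial A$, and there the condition becomes a normal-cone condition. So I would decompose according to where the nearest point lies, and treat the two pieces $A^\circ := A\setminus\partial A$ and $\partial A$ separately.

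\emph{Step 1 (splitting).} For $x$ near $A$ with nearest point $y\in A$ there are two cases. If $y\in A^\circ$, then $x-y\in N_yA^\circ$, the span of $\nabla F_1,\ldots,\nabla F_q$ at $y$. If $y\in\partial A$, then $x-y$ lies in the half-space cone
\[
\mathrm{span}\{\nabla F_i(y)\}+\Real_{\ge 0}\nabla H(y)\subset N_y\partial A .
\]
In both cases ${\rm dist}(x,A)=\|x-y\|$. Define $Z^\circ$ and $Z^\partial$ as the set $Z$ in the proof of Theorem~\ref{th:main2}, with $A$ replaced by $A^\circ$ and $\partial A$ respectively, and with the normal condition as above. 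These are semi-Pfaffian of order at most $r+s$, since they are given by rank conditions on the gradients of $F_i$ and $H$ together with the sign condition on the $\nabla H$ coefficient.

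\emph{Step 2 (the boundary piece).} The set $\partial A$ is a closed compact manifold and an effectively nonsingular basic semi-Pfaffian set with defining functions $F_1,\ldots,F_q,H$. Hence the proof of Theorem~\ref{th:main2} applies to it verbatim. Restricting to the normal cone rather than the whole normal space only shrinks the sets $\widehat X_\eps$ by one semi-Pfaffian inequality, so all steps go through unchanged: minima on compact level sets, choice of stratum, the critical curve $\Gamma$, and the application of Proposition~\ref{prop:one-dim}. This gives (\ref{eq:bound2}) for points whose nearest point lies on $\partial A$.

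\emph{Step 3 (the interior piece).} The set $A^\circ$ is not compact. This is the main obstacle, because the proof of Theorem~\ref{th:main2} used compactness of $\overline{\widehat X_\eps}$ and of $A$. My plan is to run the same argument on $\widehat X_\eps^\circ=\{(x,y): y\in A^\circ,\ x-y\in N_yA^\circ,\ \|x-y\|=\eps\}$ but take closures in $\Real^{2n}$. Then $\overline{\widehat X^\circ_\eps}$ is compact, since $A$ is compact, and its extra limit points $(x,y)$ have $y\in\partial A$ and $x-y\in N_yA^\circ\subset N_y\partial A$. So $\widehat g$ still attains its minimum on the closure. The stratification ${\mathcal S}_1$ of the closure of $Z^\circ$ (Proposition~\ref{prop:strat}) automatically contains strata lying over $\partial A$, and the curve-selection and critical-curve argument of the theorem never needed $A$ to be boundaryless: it only needed $\widehat A$ compact and the minimum attained. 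Therefore we obtain a semi-Pfaffian curve $\gamma$ ending at some $(a,a)$, $a\in A$, carrying the minima. Proposition~\ref{prop:one-dim}, applied to $\gamma$ after translation, gives (\ref{eq:bound3}), and the Cauchy--Schwarz step then gives the bound with $\|x-y\|$. For a limit point with $y\in\partial A$, the quantity $\|x-y\|$ still bounds ${\rm dist}(x,A)$ from above. Since $\varphi$ is increasing, the inequality transfers correctly.

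\emph{Step 4 (combining).} Take $N$ to be the maximum of the two exponents and $U$ the intersection of the two neighbourhoods. Every $x\in X\cap\{g>0\}$ close to $A$ falls into one of the cases of Step 1, which yields (\ref{eq:bound2}).

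The delicate point to check is in Step 3. There, and implicitly in Step 2, we use that the hypothesis $\overline X\subset(\{g>0\}\cap G)\cup A$ keeps these closure arguments inside the region where $g$ is Pfaffian. We also use that the order of the chain stays $r+s$, because $H$ belongs to the same chain as the $F_i$.
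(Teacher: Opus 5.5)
Your argument is correct and is essentially the paper's: the paper also replaces $N_yA$ at boundary points by the outward normal cone $C_yA$, which coincides with your KKT cone $\mathrm{span}\{\nabla F_i(y)\}+\Real_{\ge 0}\nabla H(y)$, and checks that it is semi-Pfaffian via a Gram/Cramer computation; it then reruns the proof of Theorem~\ref{th:main2} only once, on the set $W=\{(x,y):\ x\in\overline{X\cap\{g>0\}}\setminus A,\ y\in A,\ x-y\in C_yA\}$, which is just the union of your $Z^\circ$ and $Z^\partial$ and whose level sets $W_\eps$ are already compact, so the non-compactness of $A\setminus\partial A$ that you treat as the main obstacle in Step~3 never arises. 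One point you should make explicit, which the paper handles by invoking positive reach of $A$, is that for small $\eps$ every $(x,y)$ with $y\in A$, $x-y\in C_yA$, $\|x-y\|=\eps$ has ${\rm dist}(x,A)=\eps$; hence limit points of your sets lie off $A$ and, by the hypothesis on $\overline X$, in $\{g>0\}\cap G$, which is what makes the minima positive, and neither that hypothesis alone nor your inequality ${\rm dist}(x,A)\le\|x-y\|$ gives this.
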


Let $A \subset \ \Real^n$ be a compact smooth effectively non-singular semi-Pfaffian manifold with boundary
$B:=\partial A$.
Let $\mathring A:= A \setminus B$.
For $a\in A$, define the {\em outward normal cone} $C_aA$ as follows.
If $a\in \mathring A$, put $C_aA:=N_aA$.
If $a\in B$, let $\eta_a\in T_aA$ be an inward-pointing vector orthogonal to $T_aB$, and define
$C_aA:=N_aA+ \Real_{\ge0}(-\eta_a)$.
 
\begin{lemma}\label{le:lemma}
\begin{enumerate}
\item
If $x\in \Real^n$, $a\in A$, and $\| x-a \|=\operatorname{dist}(x,A)$, then
$x-a\in C_aA$.

\item
The set $CA:=\{(a,v)\in A\times \Real^n|\> v\in C_aA \}$
is semi-Pfaffian, with respect to the same Pfaffian chain as $A$.
\end{enumerate}
\end{lemma}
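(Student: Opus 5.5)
For part (1) I will use the first-order optimality condition for the nearest-point problem. Fix $x$ and a nearest point $a \in A$, and set $\phi(z):=\|x-z\|^2$ on $A$. The point $a$ minimizes $\phi$ on $A$.

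If $a\in\mathring A$, then $\mathring A$ is a smooth manifold without boundary near $a$. The differential of $\phi|_A$ vanishes at $a$, so $\langle x-a, t\rangle =0$ for every $t\in T_aA$. Hence $x-a\in N_aA=C_aA$.

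If $a\in B$, I take a smooth curve $z(\tau)$ in $A$ with $z(0)=a$ and $z'(0)=t$. Such curves exist for every $t\in T_aB$ in both directions, since $B\subset A$ is a manifold. For $t=\eta_a$ they exist for $\tau\ge 0$, because $\eta_a$ is inward-pointing and $A=\{F=0,\ H\le 0\}$ with $dF_1\wedge\cdots\wedge dF_q\wedge dH\ne 0$, so one can flow inside $\{F=0\}$ decreasing $H$. Minimality gives the one-sided condition $\frac{d}{d\tau}\phi(z(\tau))|_{\tau=0^+}\ge 0$, that is, $\langle x-a,t\rangle\le 0$. Applying this to $\pm t$ with $t\in T_aB$ gives $\langle x-a,t\rangle=0$ on $T_aB$. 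For $t=\eta_a$ it gives $\langle x-a,\eta_a\rangle\le 0$. Now decompose $\Real^n=N_aA\oplus T_aB\oplus\Real\eta_a$, which is orthogonal because $\eta_a\perp T_aB$ and $T_aA=T_aB\oplus\Real\eta_a$. Then $x-a=\nu+\lambda\eta_a$ with $\nu\in N_aA$ and $\lambda=\langle x-a,\eta_a\rangle/\|\eta_a\|^2\le 0$, so $x-a\in N_aA+\Real_{\ge0}(-\eta_a)$.

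For part (2) I will write $CA$ explicitly by equations, minors and sign conditions in $F_i$, $H$ and their gradients; all of these are Pfaffian in the same chain, since partial derivatives of Pfaffian functions are Pfaffian with the same chain. I will avoid $\eta_a$, which is not directly Pfaffian, by a cleaner description. For $a\in B$, I claim $C_aA=\{\sum_i\mu_i\nabla F_i(a)+\lambda\nabla H(a):\ \mu_i\in\Real,\ \lambda\ge0\}$. The orthogonal complement of $T_aB$ is ${\rm span}\{\nabla F_i(a),\nabla H(a)\}$, which equals $N_aA\oplus\Real(-\eta_a)$. The component of $\nabla H(a)$ along $\eta_a$ is negative because $H$ decreases into $A$ and $\langle\nabla H,\eta_a\rangle<0$. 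Hence the half-space in this complement with $\lambda\ge 0$ is exactly $N_aA+\Real_{\ge0}(-\eta_a)$. For $a\in\mathring A$ we have $C_aA={\rm span}\{\nabla F_i(a)\}$.

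The description is thus $CA=\{(a,v): F(a)=0,\ H(a)<0,\ {\rm rank}(\nabla F_1,\ldots,\nabla F_q,v)\le q\}\cup\{(a,v): F(a)=0,\ H(a)=0,\ \ldots\}$. The existential quantifier over $\mu,\lambda$ must be eliminated in the second piece. I will do this explicitly, using that $\nabla F_i,\nabla H$ are linearly independent. The second set becomes: ${\rm rank}(\nabla F,\nabla H,v)\le q+1$ together with $\lambda(a,v)\ge 0$. Here $\lambda$ is obtained by Cramer's rule from the Gram system $\mathcal G\,(\mu,\lambda)^T=(\langle v,\nabla F_i\rangle,\langle v,\nabla H\rangle)^T$, where $\mathcal G$ is the Gram matrix of $\nabla F_1,\ldots,\nabla F_q,\nabla H$. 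Since $\det\mathcal G>0$, the condition $\lambda\ge0$ is equivalent to the polynomial-in-derivatives inequality $\det\mathcal G_\lambda(a,v)\ge 0$, where $\mathcal G_\lambda$ is $\mathcal G$ with its last column replaced. This yields a quantifier-free semi-Pfaffian description in the same chain of order $s$. The rank conditions are expressed through vanishing minors.

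I expect the main obstacle to be the boundary case of (1). Specifically, I must justify the existence of inward curves and the identification of $N_aA+\Real_{\ge0}(-\eta_a)$ with the cone generated by the gradients, including the sign $\langle\nabla H,\eta_a\rangle<0$. This follows from $dH|_{T_a\{F=0\}}\ne0$ and $\eta_a$ pointing into $\{H<0\}$.
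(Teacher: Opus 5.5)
Your proof is correct and follows the paper's route. In (1) you use the one-sided first-order condition along the tangent cone $T_aB+\Real_{\ge 0}\eta_a$, then the orthogonal decomposition $x-a=\alpha+\lambda\eta_a$ with $\lambda\le 0$; in (2) you use rank conditions plus a Gram-matrix/Cramer's-rule elimination, with positivity of the Gram determinant turning the sign condition into a Pfaffian inequality.

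The only difference is cosmetic. You test the sign of the $\nabla H(a)$-coefficient of $v$ via the $(q+1)\times(q+1)$ Gram matrix of $\nabla F_1,\ldots,\nabla F_q,\nabla H$. The paper instead computes $D(a)\eta_a$ from the $q\times q$ Gram matrix of the $\nabla F_i$ and imposes $\langle v,D(a)\eta_a\rangle\le 0$. The two conditions agree because $\langle v,\eta_a\rangle=\lambda\langle\nabla H(a),\eta_a\rangle$ with $\langle\nabla H(a),\eta_a\rangle<0$.
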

 
 \begin{proof}
(1)\quad
The claim is obvious when $a\in\mathring A$, and we used it already at the beginning of the proof of the theorem.

Fix $x \in \Real^n$.
Let $v:=x-a$ and define the function $\varphi_x(z):= \frac{1}{2} \| x-z \|^2$, $z \in \Real^n$.
The point $a \in A$ is a minimum of the restriction $\varphi_x|_A$.
Computing its derivative at $a$ in the direction of a vector $w \in \Real^n$, we get
$d \varphi_x(a)(w)=- \langle v,w \rangle$,
where $\langle \cdot \rangle$ is the dot product.

Suppose now that $a\in B$.
The tangent cone of $A$ at $a$ is $T_aB+\Real_{\ge0}\eta_a$.
For every vector $w$ in this tangent cone the one-sided derivative of $\varphi_x$ in the direction $w$ is non-negative.
Hence $-\langle v,w\rangle\ge0$.
Taking $w\in T_aB$, as well as $-w\in T_aB$, gives $v\perp T_aB$, i.e., $v \in N_a B$.
Taking inward direction $w=\eta_a$ gives $\langle v,\eta_a\rangle\le0$, since $\eta_a \in T_a A$.

Finally, $N_aB=N_aA\oplus\mathbb R\eta_a$.
Thus, $v=\alpha+ \lambda \eta_a$ for some $\alpha \in N_a A$ and $\lambda \in \Real$.
Since $\alpha \perp \eta_a$, we have $\langle v, \eta_a \rangle = \lambda \| \eta_a \|^2 \le 0$, hence $\lambda \le 0$.
It follows that $v=\alpha+ (-\lambda)(- \eta_a)$ with $-\lambda \ge 0$, and therefore
$v\in N_aA+\Real_{\ge 0}(-\eta_a)=C_aA$. 

(2)\quad Now we prove that $CA$ is semi-Pfaffian.

Since
$$
A= \{ F_1= \cdots =F_q=0, H \le 0 \},
$$
where $F_i, H$ are Pfaffian functions in $\Real^n$ such that
the rank of the Jacobian matrix, ${\rm rank} (\nabla F_1, \ldots , \nabla F_q)=q$ on $A$, while
$$
B= \partial A= \{ F_1= \cdots =F_q=H=0 \}
$$
with ${\rm rank} (\nabla F_1, \ldots, \nabla F_q, \nabla H)=q+1$ on $B$.
Then,
$$
\mathring A= \{ F_1= \cdots =F_q=0, H < 0 \}.
$$

For $a\in\mathring A$, the condition $v\in C_aA$ is
$$
v\in \operatorname{span} \{\nabla F_1(a),\ldots,\nabla F_q(a) \},
$$
and therefore is expressed by the vanishing of the appropriate
minors of the matrix $\bigl(\nabla F_1,\ldots,\nabla F_q,v \bigr)$.

Now let $a \in B$.

Since $H <0$ in $\mathring A$, the orthogonal projection $\eta_a$ of the vector $\nabla H (a)$ onto
$T_a A$  points outward, being non-zero and orthogonal to $T_a B$.
By the definition of $C_a A$, we have $C_a A= N_a A+ \Real_{\ge 0}(-\eta_a)$.
The issue is that $\eta_a$ is not immediately given by a semi-Pfaffian formula.

Since $N_a B= N_a A \oplus \Real \eta_a$, the equivalent description of $C_a A$ is
\begin{multline*}
C_a A= \{ v \in N_a B|\> \langle v, \eta_a \rangle \le 0 \}\\
= \left\{ v|\> v\in \operatorname{span} \{\nabla F_1(a),\ldots,\nabla F_q(a),\nabla H(a)\},\>
\langle v,\eta_a \rangle \le0 \right\}.
\end{multline*}
The linear-span condition in this formula is again given by rank conditions.
To see that the inequality $\langle v,\eta_a \rangle \le 0 $ is semi-Pfaffian, consider the Gram matrix
$$
{\mathcal G}(a)=(\langle\nabla F_i(a),\nabla F_j(a) \rangle)_{i,j=1}^q.
$$
Since vectors $\nabla F_i (a)$ are linearly independent on $A$, we have $D(a):= \det {\mathcal G}(a) >0$.

By the property of Gram matrix,
\begin{equation}\label{eq:gram}
\eta_a= -{\rm proj}_{T_a A} \nabla H(a)= -\nabla H(a)+ \sum_{i=1}^{q} \beta_i(a) \nabla F_i(a),
\end{equation}
for some coefficients $\beta (a)=(\beta_1(a), \ldots ,\beta_q(a))$ satisfying the system of linear equations
${\mathcal G}(a) \beta(a)^T= b(a)$, where
$$
b_i(a)=\langle \nabla H(a), \nabla F_i (a) \rangle\quad \text{and}\quad b(a)= (b_1(a), \ldots ,b_q(a))^T.
$$
Expressing these coefficients by the Cramer's rule, we get $\beta_i(a)= \Delta_i(a)/D(a)$,
where $\Delta_i(a)$ is the determinant obtained be replacing the $i$-th column in ${\mathcal G}(a)$ by $b(a)$.
Thus, from (\ref{eq:gram}) we get
$$
D(a) \eta_a= -D(a) \nabla H(a) + \sum_{i=1}^{q} \Delta_i(a) \nabla F_i(a).
$$
Observe, that the right-hand side of this expression is semi-Pfaffian with respect to the Pfaffian chain of $A$.

Since $D(a)>0$, the inequality $\langle v,\eta_a \rangle \le 0 $ is equivalent to $\langle v,D(a)\eta_a \rangle\le 0$.
Thus, $C_a A$, for every $a \in A$, is given by a semi-Pfaffian formula.
It follows that $C A$ is a semi-Pfaffian set.
\end{proof}

\begin{proof}[Proof of Corollary~\ref{cor:cor}]
The proof largely follows the proof of Theorem~\ref{th:main2}.

By Lemma~\ref{le:lemma}, the set $CA$ is semi-Pfaffian with respect to the same Pfaffian chain as $A$.
Introduce new variables $y=(y_1,\ldots,y_n)$ and define
$$
W:= \left\{ (x,y)\in \Real^{2n}: x\in \overline{X\cap{g>0}}\setminus A,\quad y\in A,\quad x-y\in C_yA \right\}.
$$
The set $W$ is semi-Pfaffian with respect to the combined Pfaffian chain of $X$ and $A$, hence of order at most $r+s$.
As in the proof of Theorem~\ref{th:main2}, put $\widehat g(x,y):=g(x),\ \rho(x,y):=\|x-y \|^2$.
For $\varepsilon>0$, let $W_\varepsilon:=W \cap \{\rho=\varepsilon^2 \}$.
For all sufficiently small $\varepsilon>0$, the set $W_\varepsilon$ is compact.
Indeed, the set $A$ is compact and $\|x-y \|=\varepsilon$, hence both $x$ and $y$ stay in a bounded neighbourhood of $A$.
Moreover, by the assumption $\overline X\subset ({g>0}\cap G)\cup A$, and because smooth manifold with boundary $A$
has positive reach (the nearest point on $A$ to $x$ is unique),
every limit point of the $x$-coordinates with positive distance from $A$ belongs to $\overline{X\cap{g>0}}\setminus A$.
Therefore, $\widehat g$ attains a minimum on $W_\varepsilon$.
Denote this minimum by $m(\varepsilon)$.
We now repeat the proof of Theorem~\ref{th:main2}, replacing the set $Z$ used there by $W$.
Take an effectively non-singular semi-Pfaffian stratification of $W$.
Using the same truncated connected-component argument as in the proof of Theorem~\ref{th:main2},
choose a stratum $\widehat Y$ such that $\widehat Y_\varepsilon:=\widehat Y\cap \{ \rho=\varepsilon^2 \}$
contains a point at which the global minimum $m(\varepsilon)$ is attained for every sufficiently small $\varepsilon>0$.

Define
$$
\widehat C:= \left\{ (x,y)\in\widehat Y|\> \nabla\widehat g(x,y) \in \left( N_{(x,y)}\widehat Y+
\operatorname{span}\{\nabla\rho(x,y)\} \right) \right\}.
$$
As in the proof of Theorem~\ref{th:main2}, $\widehat C$ is semi-Pfaffian, since the criticality condition
for $\widehat g|_{\widehat Y_\varepsilon}$ is expressed by rank conditions.
Stratify $\widehat C$ according to Proposition~\ref{prop:strat}.
Let $\psi(x,y)=c\cdot (x,y)$ for a generic $c\in\mathbb R^{2n}$.
By the same Sard--Fubini argument, as in the proof of Theorem~\ref{th:main2}, taking
$\varepsilon_0$ sufficiently small, all critical points of the restrictions of $\psi$ to the strata of
$\widehat C\cap \{\rho=\varepsilon^2 \}$ are isolated for every $0<\varepsilon<\varepsilon_0$.
Hence their total locus $\Gamma$ is a semi-Pfaffian set having finite non-empty fibers under $\rho$.
By Hardt triviality, every truncated set
$\Gamma\cap \{\delta^2<\rho<\varepsilon_0^2 \}$, where $\delta>0$, is one-dimensional.
Arguing as in the proof of Theorem~\ref{th:main2}, there is a smooth semi-Pfaffian curve
$\gamma\subset\Gamma$ such that for every sufficiently small $\varepsilon>0$ the set
$\gamma\cap \{\rho=\varepsilon^2 \}$
contains a point at which $\widehat g$ attains the global minimum $m(\varepsilon)$.

Let $(a,a)\in\overline\gamma\cap \{(z,z)|\> z\in A \}$.
Applying Proposition~\ref{prop:one-dim} to $\gamma$, after translation by $(a,a)$,
we obtain $N_0\in\mathbb N$ such that, for every point $(x,y)\in\gamma$ sufficiently close to $(a,a)$,
$$
\widehat g(x,y) \ge \frac{1}{ e_{r+s}\left( \|(x,y)-(a,a) \|^{-N_0}\right)}.
$$
Since
$\|x-y \|\le\sqrt{2}\|(x,y)-(a,a) \|$, after increasing $N_0$, we obtain some $N\in\mathbb N$ such that
$$
m(\varepsilon) \ge
\frac{1}{e_{r+s}(\varepsilon^{-N})}
$$
for all sufficiently small $\varepsilon>0$.
It remains to pass from $W_\varepsilon$ to the original set $X$.
Let a point $x\in X\cap \{g>0 \}$ be sufficiently close to $A$, and put $\varepsilon:=\operatorname{dist}(x,A)$.
Choose a nearest point $a\in A$, so that $\| x-a \|=\varepsilon$.
By Lemma~\ref{le:lemma}, the vector $x-a\in C_aA$.
Therefore $(x,a)\in W_\varepsilon$.
Hence, by the definition of $m(\varepsilon)$, we have $g(x)=\widehat g(x,a)\ge m(\varepsilon)$.
It follows that,
$$
g(x)\ge \frac{1}{e_{r+s}\left( \operatorname{dist}(x,A)^{-N} \right)}
$$
for some $N\in\mathbb N$ and the neighbourhood $U= \{ x \in X \cap \{ g>0 \} |\ {\rm dist} (x,A) < \eps_0 \}$
for a sufficiently small $\eps_0 >0$.
This proves the corollary.
\end{proof}

\end{document}